\theoremstyle{theorem}
\newtheorem{theorem}{Theorem}
\theoremstyle{definition}
\newtheorem*{example}{Example}
\begin{document}

\title{A Partition Identity Related to\\ Stanley's Theorem}
\markright{A Partition Identity Related to Stanley's Theorem}
\author{Mircea Merca and Maxie D. Schmidt}

\maketitle

\begin{abstract}
In this paper, we use the Lambert series generating function for Euler's totient function to introduce a new identity for the number of $1$'s in the partitions of $n$. A new expansion for Euler's partition function $p(n)$ is derived in this context. 
These surprising new results connect the famous classical totient function from multiplicative number theory to the additive theory of partitions.
\end{abstract}


\section{Introduction.}

A partition of a positive integer $n$ is a sequence of positive integers whose sum is $n$. The order of the summands is unimportant when writing the partitions of $n$, but for consistency, a partition of $n$ will be written with the summands in a nonincreasing order \cite{Andrews76}. For example, the partitions of $5$ are given as:
$$ 5,\ 4+1,\ 3+2,\ 3+1+1,\ 2+2+1,\ 2+1+1+1,\ 1+1+1+1+1.$$
As usual, $p(n)$ denotes the number of partitions of $n$ and $p(0)=1$.
Integer partitions are of particular interest in combinatorics and number theory, partly because many profound questions concerning integer partitions are easily stated, but not easily proved. There are a variety of partition identities of the form 
``\emph{the number of partitions of $n$ satisfying condition $A$ is equal to the number of partitions of $n$ satisfying condition $B$}.''

As we can see in our example, the number of $1$'s in the partitions of $5$ is equal to $12$. On the other hand, the number of different parts appearing in the partitions of $5$ is $1+2+2+2+2+2+1=12$.
The following result in partition theory has been widely attributed to Richard Stanley, although it is a particular case of a more general result that had been established by Nathan Fine fifteen years earlier \cite{Gilbert}.

\begin{theorem}[Stanley]
	The number of $1$'s in the partitions of $n$ is equal to the number of parts that appear at least once in a given partition of $n$, summed over all partitions of $n$.
\end{theorem}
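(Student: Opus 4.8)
The plan is to prove the identity by showing that \emph{both} of the quantities being compared equal the same partial sum, namely $\sum_{j=0}^{n-1} p(j)$. Write $a(n)$ for the total number of $1$'s occurring in the partitions of $n$, and $b(n)$ for the number of distinct part sizes in a partition, summed over all partitions of $n$; the theorem asserts $a(n)=b(n)$. I would treat each side by the same device: replace a count of \emph{parts} by a count of \emph{partitions}, using the obvious bijection ``delete or insert a fixed collection of equal parts.''

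For $a(n)$, I would split the total number of $1$'s into contributions by multiplicity: a partition with exactly $r$ ones is counted once for each $m$ with $1 \le m \le r$, so $a(n) = \sum_{m \ge 1} c_m(n)$, where $c_m(n)$ is the number of partitions of $n$ having at least $m$ parts equal to $1$. Removing $m$ ones is a bijection from those partitions onto all partitions of $n-m$ (append $m$ ones to invert it), hence $c_m(n) = p(n-m)$ and $a(n) = \sum_{m \ge 1} p(n-m) = \sum_{j=0}^{n-1} p(j)$. For $b(n)$, the number of distinct part sizes of a partition $\lambda$ equals the number of integers $k \ge 1$ that occur in $\lambda$, so $b(n) = \sum_{k \ge 1} d_k(n)$, where $d_k(n)$ counts the partitions of $n$ containing at least one part equal to $k$. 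Deleting a single copy of $k$ is a bijection onto partitions of $n-k$, so $d_k(n) = p(n-k)$ and $b(n) = \sum_{k \ge 1} p(n-k) = \sum_{j=0}^{n-1} p(j)$. Comparing the two evaluations gives $a(n) = b(n)$.

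An equivalent route, which I would mention as an alternative, is purely generating-functional: marking the part $1$ by a variable $z$ and differentiating $\bigl(\frac{1}{1-zq}\bigr)\prod_{j \ge 2}\frac{1}{1-q^j}$ with respect to $z$ at $z = 1$ shows $\sum_{n\ge0} a(n)q^n = \frac{q}{1-q}\prod_{j\ge1}\frac{1}{1-q^j}$; on the other hand, the generating function for the partitions of $n$ with at least one part equal to $k$ is $q^k\prod_{j\ge1}\frac{1}{1-q^j}$, and summing over $k \ge 1$ yields the \emph{same} series $\frac{q}{1-q}\prod_{j\ge1}\frac{1}{1-q^j} = \sum_{n\ge0} b(n)q^n$. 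I do not anticipate a real obstacle here; the only points deserving explicit care are the verification that ``delete $m$ equal parts of size $t$'' is genuinely a bijection onto the partitions of $n - mt$ (both injectivity and surjectivity are immediate once the map is stated precisely), and, in the generating-function version, the routine justification of termwise differentiation of the formal power series.
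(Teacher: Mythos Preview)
Your argument is correct. Both the bijective computation showing $a(n)=b(n)=\sum_{j=0}^{n-1}p(j)$ and the generating-function alternative are standard, valid, and cleanly written; the only minor remark is that in the alternative route the ``termwise differentiation'' is purely formal (differentiation of formal power series in $z$ with coefficients in $\mathbb{Q}[[q]]$), so no analytic justification is actually needed.

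There is, however, nothing in the paper to compare against: Stanley's theorem is quoted there solely as background, with attribution to Fine via the reference \cite{Gilbert}, and is never proved; the paper's own contributions begin with Theorem~\ref{T2}. That said, the devices you employ are precisely the ones the paper deploys elsewhere. Section~3 invokes the same ``remove/insert a block of equal parts'' bijection to obtain $S^{(r)}_{n,k}=\sum_{t\ge 1}p_r(n-t k)$, and it expresses $S^{(1)}_{n-1,1}$ as a partial sum of values of $p$---essentially your formula for $a(n)$. So while the paper offers no proof of this particular statement, your argument is entirely in the spirit of the surrounding material.
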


In this paper, we shall prove a new identity for the number of $1$'s in the partitions of $n$. 
This new result involves a well-known object in multiplicative number theory: Euler's totient $\phi(n)$. This is a multiplicative function that counts the totatives of $n$, which are the positive integers less than or equal to $n$ that are relatively prime to $n$. 
In what follows, we denote by $S^{(r)}_{n,k}$ the number of $k$'s in the partitions of $n$ 
with the smallest part at least $r$. 

\begin{theorem}\label{T2}
	The number of $1$'s in the partitions of $n$ is equal to
	$$\sum_{k=2}^{n+1} \phi(k) S^{(2)}_{n+1,k}.$$
\end{theorem}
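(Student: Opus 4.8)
The plan is to compute the generating functions of both sides of the asserted identity as formal power series in $q$ and verify that they coincide. For the left-hand side, write $A(n)$ for the number of $1$'s among all partitions of $n$. A partition of $n$ contains at least $m$ parts equal to $1$ precisely when deleting $m$ of those $1$'s leaves a partition of $n-m$, so $A(n)=\sum_{m\ge 1}p(n-m)$, and hence
\[
\sum_{n\ge 0}A(n)q^{n}=\frac{q}{1-q}\,\prod_{j\ge 1}\frac{1}{1-q^{j}}.
\]

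Next I would record a generating function for $S^{(2)}_{m,k}$ as a function of $m$. Partitions of $m$ with every part at least $2$ are generated by $\prod_{j\ge 2}(1-q^{j})^{-1}=(1-q)\prod_{j\ge 1}(1-q^{j})^{-1}$, and weighting each such partition by the number of its parts equal to a fixed $k\ge 2$ amounts to replacing the factor $(1-q^{k})^{-1}$ by $\sum_{i\ge 0} i\,q^{ik}=q^{k}(1-q^{k})^{-2}$, so that
\[
\sum_{m\ge 0}S^{(2)}_{m,k}q^{m}=\frac{q^{k}}{1-q^{k}}\,(1-q)\prod_{j\ge 1}\frac{1}{1-q^{j}}.
\]
I would then multiply by $\phi(k)$ and sum over $k\ge 2$; since the terms with $k>n+1$ do not affect the coefficient of $q^{n+1}$, the finite sum in the theorem equals the full sum. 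Interchanging the two summations and invoking the Lambert series for Euler's totient, $\sum_{k\ge 1}\phi(k)\,q^{k}/(1-q^{k})=q/(1-q)^{2}$, so that $\sum_{k\ge 2}\phi(k)\,q^{k}/(1-q^{k})=q/(1-q)^{2}-q/(1-q)=q^{2}/(1-q)^{2}$, one obtains
\[
\sum_{N\ge 0}\Biggl(\sum_{k\ge 2}\phi(k)\,S^{(2)}_{N,k}\Biggr)q^{N}
=(1-q)\prod_{j\ge 1}\frac{1}{1-q^{j}}\cdot\frac{q^{2}}{(1-q)^{2}}
=\frac{q^{2}}{1-q}\,\prod_{j\ge 1}\frac{1}{1-q^{j}}.
\]
Comparing the coefficient of $q^{n+1}$ here with the coefficient of $q^{n}$ in the displayed generating function for $A(n)$ — these two series differ only by the factor $q$ — yields $A(n)=\sum_{k=2}^{n+1}\phi(k)\,S^{(2)}_{n+1,k}$.

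All of this is routine formal-series bookkeeping; the only genuinely substantive input is the Lambert series identity for $\phi$, and the step that most needs care is the passage from "partitions into parts $\ge 2$" to the series that weights a partition by the multiplicity of a fixed part $k$ (getting the factor $q^{k}(1-q^{k})^{-1}$ rather than $(1-q^{k})^{-1}$ right). I would also point out a purely combinatorial variant: using Gauss's identity $\sum_{d\mid N}\phi(d)=N$ together with $S^{(2)}_{n+1,k}=\sum_{j\ge 1}q_{\ge 2}(n+1-jk)$, where $q_{\ge 2}$ counts partitions into parts $\ge 2$, the right-hand side collapses to $\sum_{m=0}^{n-1}(n-m)\,q_{\ge 2}(m)$, which equals $\sum_{j=0}^{n-1}p(j)=A(n)$ by classifying each partition of $j\le n-1$ according to how many $1$'s it has.
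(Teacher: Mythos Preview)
Your proof is correct and follows essentially the same generating-function route as the paper: the same formula for $\sum_m S^{(2)}_{m,k}q^m$, the same Lambert series identity for $\phi$, and the same coefficient comparison, differing only in the cosmetic choice of subtracting the $k=1$ term before rather than after applying the Lambert identity. Your closing combinatorial sketch likewise parallels the paper's second proof, which uses Gauss's formula $\sum_{d\mid N}\phi(d)=N$ together with $S^{(2)}_{n,k}=\sum_{t\ge 1}p_2(n-tk)$ and the relation $p_2(m)=p(m)-p(m-1)$.
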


\begin{example}
	We have already seen above that the number of $1$'s in the partitions of $5$ is equal to $12$.
	The partitions of $6$ that do not contain $1$ as a part are:
	$$ 6 = 4+2 = 3+3 = 2+2+2 .$$
	We also have
	$$\phi(2)\cdot 4 + \phi(3) \cdot 2 + \phi(4) \cdot 1 + \phi(6) \cdot 1 = 4 + 4 + 2 + 2 = 12.$$
\end{example}

The set of partitions of $n$ that contain $1$ as a part can be obtained from the set of partitions of $n-1$ adding to each partition a single $1$.  This is a typical example of a bijection between two sets of partitions. For this reason, we can say that our theorem establishes a connection between the set of partitions of $n$ that contain $1$ as a part and the set of partitions of $n$ that do not contain $1$ as a part, i.e.,  
$$\sum_{1+t_1+2t_2+\cdots+nt_n=n} t_1 = \sum_{2t_2+\cdots+nt_n=n} \phi(2) t_2 +\cdots+\phi(n) t_n.$$

\section{Proof of Theorem \ref{T2} via generating functions.}
For all $k\geqslant r$, elementary techniques in the theory of partitions \cite{Andrews76} 
give the generating function 
\begin{align*}
\sum_{n=k}^{\infty} S^{(r)}_{n,k} q^n 
& = \left(q^k+q^{2k}+q^{3k}+\cdots  \right)\cdot \frac{(1-q)(1-q^2) \cdots (1-q^{r-1})}{(q; q)_{\infty}}\\
& = \frac{q^k}{1-q^k} \cdot \frac{1}{(q^r; q)_{\infty}},
\end{align*}
where 
$$(a;q)_\infty = \prod_{k=0}^{\infty} (1-aq^k).$$
Because the infinite product $(a;q)_\infty$ diverges when $a\neq0$ and $|q|\geqslant 1$, whenever $(a; q)_\infty$ appears in a formula, we shall assume that $|q|<1$.
Then we can write
\begin{align*}
\frac{1-q}{(q;q)_\infty}\sum_{k=1}^{\infty} \frac{\phi(k) q^k}{1-q^k} 
 = \frac{q}{(q;q)_\infty} + \sum_{n=2}^{\infty} \sum_{k=2}^{\infty}  \phi(k) S^{(2)}_{n,k} q^n.
\end{align*}
On the other hand, considering the well-known Lambert series generating function \cite[Theorem 309]{Hardy}
$$\sum_{k=1}^{\infty} \frac{\phi(k) q^k}{1-q^k} = \frac{q}{(1-q)^2},$$
we obtain
\begin{align*}
\frac{1-q}{(q;q)_\infty}\sum_{k=1}^{\infty} \frac{\phi(k) q^k}{1-q^k} 
 = \frac{q}{(q;q)_\infty} + \frac{q}{1-q} \cdot \frac{q}{(q;q)_\infty} 
 = \frac{q}{(q;q)_\infty} + \sum_{n=1}^\infty S^{(1)}_{n,1} q^{n+1}.
\end{align*}
Thus, we deduce the identity
$$S^{(1)}_{n,1} = \sum_{k=2}^{n+1} \phi(k) S^{(2)}_{n+1,k},$$
which completes the proof of the theorem.  

\section{A combinatorial proof of Theorem \ref{T2}.}
We denote by $p_r(n)$ the number of partitions of $n$ with the smallest part at least $r$. 
Since the set of partitions of $n$ that contain $k$ as a part can be obtained from the set of partitions of $n-k$ adding to each partition a single $k$, for all $k\geqslant r$ we note that $p_r(n-t\cdot k)$ counts the partition of $n$ with at least $t$ parts equal to $k$ and the smallest part at least $r$. Moreover, for all $k\geqslant r$ we 
deduce that $S^{(r)}_{n,k}$ can be expressed in terms of the partition function $p_r(n)$ as follows:
$$S^{(r)}_{n,k} = \sum\limits_{t=1}^{\lfloor n/k \rfloor} p_r(n-t\cdot k).$$
For arbitrary $\{a_n\}_{n\geqslant 1}$ and $\{b_n\}_{n\geqslant 0}$, we have the identity 
$$\sum_{k=1}^n \left( \sum_{d|k} a_d\right) b_{n-k} = \sum_{k=1}^n \left( \sum_{i=1}^{\lfloor n/k \rfloor} b_{n-i\cdot k}\right) a_k.$$
The proof follows easily replacing $a_n$ by $\phi(n)$ and $b_n$ by $p_2(n)$
in the previous equation. Hence we obtain that
\begin{align*}
\sum_{k=1}^n \left( \sum_{d|k} \phi(d)\right) p_2(n-k)
& = \sum_{k=1}^n k \big( p(n-k)-p(n-1-k) \big)\\
& = p(n-1)+ \sum_{k=1}^{n-1} p(n-k) \\
& = p(n-1)+ S^{(1)}_{n-1,1}
\end{align*}
and
\begin{align*}
\sum_{k=1}^n \left( \sum_{i=1}^{\lfloor n/k \rfloor} p_2(n-i\cdot k)\right) \phi(k)
& = \sum_{i=1}^n p_2(n-i) + \sum_{k=2}^n S^{(2)}_{n,k} \phi(k) \\
& = p(n-1)+ \sum_{k=2}^n S^{(2)}_{n,k} \phi(k),
\end{align*}
where we have invoked Euler's classical formula \cite[Theorem 63]{Hardy}
$$\sum_{d|n} \phi(d) = n,$$
and the relation
$p_2(n)=p(n)-p(n-1)$.

\section{A new formula for Euler's partition function.} 

In this section, motivated by the first proof of Theorem \ref{T2}, we provide a new formula for the number of partitions of $n$ that involves the half totient function, $\phi(n)/2$.
For $n>2$, we remark that $P_{\phi}(n)=\phi(n)/2$ counts the number of partitions of $n$ into two relatively prime parts. 

\begin{theorem}
\label{theorem_T3} 
For $n\geqslant 0$, 
$$p(n) = \sum_{k=3}^{n+3} P_{\phi}(k) S^{(3)}_{n+3,k}.$$
\end{theorem}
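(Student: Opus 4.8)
The plan is to run the same generating-function argument used for Theorem~\ref{T2}, now with the window parameter $r=3$. Starting from the identity
\[
\sum_{n=k}^{\infty} S^{(3)}_{n,k} q^n = \frac{q^k}{1-q^k}\cdot\frac{1}{(q^3;q)_\infty},
\]
valid for $k\geqslant 3$, I would multiply by $P_{\phi}(k)=\phi(k)/2$ and sum over $k\geqslant 3$, so that the left-hand side becomes $\sum_{n\geqslant 3}\sum_{k=3}^{n}P_{\phi}(k)S^{(3)}_{n,k}q^n$ after interchanging the two sums (legitimate for $|q|<1$, exactly as in the first proof). The point is then to evaluate the resulting product in closed form and recognize it as $q^3/(q;q)_\infty=\sum_{n\geqslant 0}p(n)q^{n+3}$.

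First I would handle the Lambert-type factor. Subtracting the $k=1$ and $k=2$ terms (with $\phi(1)=\phi(2)=1$) from the evaluation $\sum_{k\geqslant 1}\phi(k)q^k/(1-q^k)=q/(1-q)^2$ recalled above and halving gives
\[
\sum_{k\geqslant 3}\frac{\phi(k)}{2}\cdot\frac{q^k}{1-q^k}
=\frac12\left(\frac{q}{(1-q)^2}-\frac{q}{1-q}-\frac{q^2}{1-q^2}\right)
=\frac{q^3}{(1-q)^2(1+q)},
\]
where the last equality is a short algebraic simplification. Next I would use $(q^3;q)_\infty=(q;q)_\infty/\big((1-q)(1-q^2)\big)$ to write $1/(q^3;q)_\infty=(1-q)(1-q^2)/(q;q)_\infty$, multiply the two expressions, and observe that the factors $(1-q)(1-q^2)=(1-q)^2(1+q)$ cancel exactly against the denominator $(1-q)^2(1+q)$, leaving precisely $q^3/(q;q)_\infty$.

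Finally, comparing the coefficient of $q^{n+3}$ on both sides yields $p(n)=\sum_{k=3}^{n+3}P_{\phi}(k)S^{(3)}_{n+3,k}$, as claimed; note that the lower limit $k=3$ is exactly the range on which $P_{\phi}(k)=\phi(k)/2$ is integer-valued and counts partitions of $k$ into two coprime parts, so no term is lost at the boundary. I do not expect a serious obstacle: the only real care is the bookkeeping of the two subtracted terms for $k=1,2$ and the verification that the $(1-q)$ and $(1+q)$ factors cancel cleanly rather than leaving a stray factor. One could instead attempt a combinatorial proof along the lines of the preceding section, but the divisor sum $\sum_{d\mid k}\phi(d)/2=k/2$ is not integer-valued, so that telescoping step becomes messier and the generating-function route is the cleaner option.
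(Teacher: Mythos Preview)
Your argument is correct and is essentially the same generating-function proof the paper gives: both multiply the Lambert series $\sum_{k\geqslant 1}\phi(k)q^k/(1-q^k)=q/(1-q)^2$ by $(1-q)(1-q^2)/(q;q)_\infty=1/(q^3;q)_\infty$ and isolate the $k\geqslant 3$ contribution to obtain $2q^3/(q;q)_\infty$. The only cosmetic differences are that you halve before comparing coefficients (working with $P_\phi(k)$ directly) and subtract the $k=1,2$ terms from the Lambert series first, whereas the paper keeps the full $\phi(k)$-weighted sum and reads off $2p(n-3)$ at the end.
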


\begin{proof}
Considering the generating function for $S^{(3)}_{n,k}$, we can write
\begin{align*}
\frac{(1-q)(1-q^2)}{(q;q)_\infty}\sum_{k=1}^{\infty} \frac{\phi(k) q^k}{1-q^k} 
= \frac{q+q^2-2q^3}{(q;q)_\infty} + \sum_{k=3}^{\infty} \sum_{n=3}^{\infty} \phi(k) S^{(3)}_{n,k} q^n 
\end{align*}
and 
\begin{align*}
\frac{(1-q)(1-q^2)}{(q;q)_\infty}\sum_{k=1}^{\infty} \frac{\phi(k) q^k}{1-q^k} 
 = \frac{q+q^2}{(q;q)_\infty}.
\end{align*}
Hence we deduce that
$$ \sum_{n=3}^\infty \left(\sum_{k=3}^{n} \phi(k) S^{(3)}_{n,k} \right) q^n 
= \frac{2q^3}{(q;q)_\infty}=\sum_{n=3}^\infty 2p(n-3) q^{n},$$
where we have invoked the well-known generating function of $p(n)$, i.e.,
$$\sum_{n=0}^\infty p(n) q^n = \frac{1}{(q;q)_\infty}.$$
The theorem is proved.  
\end{proof}

\section{Conclusions.} 

We have connected Euler's totient function with a new decomposition of Stanley's theorem and provided two 
stylistically different proofs of our result: one following the method of generating functions and another with purely combinatorial methods. Along the way, we have derived a new formula for Euler's partition function 
$p(n)$ expanded in terms of our additive restricted partition functions $S^{(r)}_{n,k}$ when $r = 3$. 
These results are significant and special in form given their unusual new connections with the functions of multiplicative number theory and the more additive nature of the theory of partitions. 
It seems that the first result that combines Euler's totient function $\phi(n)$ and Euler's partition function $p(n)$ has been recently discovered 
in \cite[Corollary 5.13]{Merca}.
Finally, we remark that in 2005 Andrews and Perez \cite{Andrews05} presented an interesting relation which involves Euler's totient function and compositions (i.e., ordered partitions of natural numbers).

\begin{acknowledgment}{Acknowledgment.}
The authors appreciate the anonymous referees for their comments and careful reading of the original version of this paper.	
\end{acknowledgment}

\begin{biog}
\item[Mircea Merca] 
\begin{affil}
Academy of Romanian Scientists, Splaiul Independentei 54, Bucharest, 050094 Romania. \\
\href{mailto:mircea.merca@profinfo.edu.ro}{mircea.merca@profinfo.edu.ro} 
\end{affil}

\item[Maxie D. Schmidt] 
\begin{affil}
Georgia Institute of Technology, School of Mathematics, Atlanta, GA 30332 USA. \\
\href{mailto:maxieds@gmail.com}{maxieds@gmail.com}, 
\href{mailto:mschmidt34@gatech.edu}{mschmidt34@gatech.edu} 
\end{affil}
\end{biog}
\vfill\eject

\bigskip
\footnoterule
\footnotesize{http://dx.doi.org/10.XXXX/amer.math.monthly.122.XX.XXX}
\footnotesize{MSC: Primary 05A17, Secondary 11B75; 11P81}


\begin{thebibliography}{11}

\bibitem{Andrews76}
G. E. Andrews, \emph{The Theory of Partitions}. Addison-Wesley Publishing, New York, 1976.

\bibitem{Andrews05} 
G. E. Andrews, R. A. Perez, A new partition identity coming from complex dynamics, \emph{Annals of Comb.}, 245--257. 

\bibitem{Gilbert} 
R. A. Gilbert, A Fine rediscovery, \textit{Amer. Math. Monthly}, \textbf{122} (2015) 322--331. 

\bibitem{Hardy} 
G. H. Hardy, E. M. Wright, \emph{An Introduction to the Theory of Numbers}. Fifth ed. Clarendon Press, Oxford, 1979.

\bibitem{Merca}
M. Merca, The Lambert series factorization theorem, \emph{Ramanujan J.} (2017). \url{https://doi.org/10.1007/s11139-016-9856-3}

\end{thebibliography}
\end{document}